\newtheorem{theorem}{Theorem}
\newtheorem{lemma}[theorem]{Lemma}
\newtheorem{claim}{Claim}
\title{On the maximum number of maximum\\ independent sets in connected graphs}
\author{E. Mohr \and D. Rautenbach}
\date{}
\begin{document}

\onehalfspace 

\maketitle

\begin{center}
Institut f\"{u}r Optimierung und Operations Research,
Universit\"{a}t Ulm, Ulm, Germany\\
\{\texttt{elena.mohr, dieter.rautenbach}\}\texttt{@uni-ulm.de}\\[3mm]
\end{center}

\begin{abstract}
We characterize the connected graphs of given order $n$ and given independence number $\alpha$ 
that maximize the number of maximum independent sets.
For $3\leq \alpha\leq n/2$,
there is a unique such graph that
arises from the disjoint union of $\alpha$ cliques of orders
$\left\lceil\frac{n}{\alpha}\right\rceil$
and 
$\left\lfloor\frac{n}{\alpha}\right\rfloor$,
by selecting a vertex $x$ in a largest clique
and adding an edge between $x$ and a vertex in each of the remaining $\alpha-1$ cliques.
Our result confirms a conjecture of Derikvand and Oboudi 
[On the number of maximum independent sets of graphs, Transactions on Combinatorics 3 (2014) 29-36].
\end{abstract}

\bigskip

\section{Introduction}

Moon and Moser's \cite{momo} classical result 
on the number of maximal cliques 
immediately yields a characterization of the graphs 
of a given order
that have the maximum number of maximum independent sets.
Similarly, 
the characterization of the connected graphs 
of a given order with that property
follows from a result of Griggs, Grinstead, and Guichard \cite{grgrgu};
see \cite{joch}.
Using a result of Zykov \cite{zy} allows to characterize 
the graphs of a given order and a given independence number
that have the maximum number of maximum independent sets;
see Theorem \ref{theorem1} below.
Our contribution in the present paper 
is the connected version of this result;
that is,
we characterize 
the connected graphs of a given order and a given independence number
that have the maximum number of maximum independent sets.
Our results confirm a recent conjecture of Derikvand and Oboudi \cite{deob}.

We consider only finite, simple, and undirected graphs,
and use standard terminology and notation. 
An {\it independent set} in a graph $G$
is a set of pairwise non-adjacent vertices of $G$.
The {\it independence number} $\alpha(G)$ of $G$ 
is the maximum cardinality of an independent set in $G$.
An independent set in $G$ is {\it maximum} if it has cardinality $\alpha(G)$.
For a graph $G$, 
let $\sharp\alpha(G)$ be the number of maximum independent sets in $G$. 
For a vertex $u$ of $G$, 
let $\sharp\alpha(G,u)$ be the number of maximum independent sets 
in $G$ that contains $u$. 

Let $n$ and $\alpha$ be positive integers with $\alpha<n$.

Let the graph $G(n,\alpha)$ 
be the disjoint union of 
one clique $C_0$ 
of order
$\left\lceil\frac{n}{\alpha}\right\rceil$,
and $\alpha-1$ further cliques 
$C_1,\ldots,C_{\alpha-1}$ 
of orders 
$\left\lceil\frac{n}{\alpha}\right\rceil$
and 
$\left\lfloor\frac{n}{\alpha}\right\rfloor$,
that is, the graph
$G(n,\alpha)$ is the complement of the {\it Tur\'{a}n graph} 
of order $n$ and clique number $\alpha$.
Let the graph $F(n,\alpha)$ arise from $G(n,\alpha)$
by adding the edges $x_0x_1,\ldots,x_0x_{\alpha-1}$,
where $x_i$ is a vertex in $C_i$ for every $i$ in $\{ 0,\ldots,\alpha-1\}$.
We will call the vertex $x_0$ the {\it special cutvertex} of $F(n,\alpha)$.
Note that $x_0$ may not be unique if $\alpha\leq 2$, and $n$ is a multiple of $\alpha$.

For $\frac{n}{\alpha}\geq 2$, let
$$\mathcal{F}(n,\alpha)=\begin{cases} 
\big\{F(n,\alpha),C_5\big\} &\text{, if } (n,\alpha)=(5,2),\mbox{ and}\\[3mm]
\big\{F(n,\alpha)\big\} &\text{, otherwise,}
\end{cases}$$
where $C_5$ denotes the cycle of order $5$, and
for $\frac{n}{\alpha}<2$, 
let $\mathcal{F}(n,\alpha)$ be the set of all connected graphs $G$ 
that have a vertex $x_0$ 
such that $G-x_0$ is isomorphic to $G(n-1,\alpha)$.
It is easy to see that every graph in $\mathcal{F}(n,\alpha)$
for $\frac{n}{\alpha}<2$
is isomorphic to a graph that arises from $F(n,\alpha)$
by possibly adding further edges incident with the special cutvertex $x_0$ of $F(n,\alpha)$.

See Figure \ref{fig1} for an illustration.

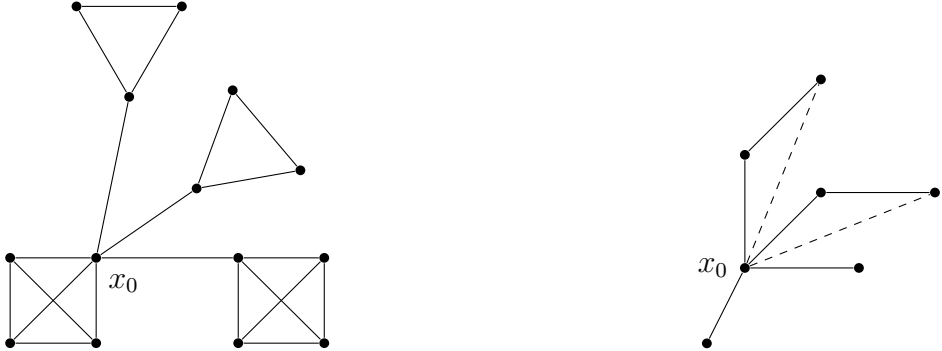
\begin{figure}[H]
\begin{minipage}[t]{0.5\textwidth}
\begin{center}
\begin{tikzpicture}

\def\krad{0.8}

\def\angle{360/3}
\begin{scope}[shift={(1,3.5)}];
\node[fill, circle, inner sep=1.3pt] (w1) at (270-\angle/2+0*\angle+60:\krad){};
\node[fill, circle, inner sep=1.3pt] (w2) at (270-\angle/2+1*\angle+60:\krad){};
\node[fill, circle, inner sep=1.3pt] (w3) at (270-\angle/2+2*\angle+60:\krad){};
\end{scope}

\begin{scope}[shift={(2.5,2)}];
\node[fill, circle, inner sep=1.3pt] (x1) at (270-\angle/2+0*\angle+10:\krad){};
\node[fill, circle, inner sep=1.3pt] (x2) at (270-\angle/2+1*\angle+10:\krad){};
\node[fill, circle, inner sep=1.3pt] (x3) at (270-\angle/2+2*\angle+10:\krad){};
\end{scope}

\def\angle{360/4}
\begin{scope}[shift={(3,0)}];
\node[fill, circle, inner sep=1.3pt] (y1) at (270-\angle/2+0*\angle:\krad){};
\node[fill, circle, inner sep=1.3pt] (y2) at (270-\angle/2+1*\angle:\krad){};
\node[fill, circle, inner sep=1.3pt] (y3) at (270-\angle/2+2*\angle:\krad){};
\node[fill, circle, inner sep=1.3pt] (y4) at (270-\angle/2+3*\angle:\krad){};
\end{scope}

\begin{scope}[shift={(0,0)}];
\node[fill, circle, inner sep=1.3pt] (v1) at (270-\angle/2+0*\angle:\krad){};
\node[fill, circle, inner sep=1.3pt] (v2) at (270-\angle/2+1*\angle:\krad){};
\node[fill, circle, inner sep=1.3pt, label=285:$x_0$] (v3) at (270-\angle/2+2*\angle:\krad){};
\node[fill, circle, inner sep=1.3pt] (v4) at (270-\angle/2+3*\angle:\krad){};
\end{scope}

\draw (w1) -- (w2);
\draw (w1) -- (w3);
\draw (w2) -- (w3);

\draw (x1) -- (x2);
\draw (x1) -- (x3);
\draw (x2) -- (x3);
  
\draw (y1) -- (y2);
\draw (y1) -- (y3);
\draw (y1) -- (y4);
\draw (y2) -- (y3);
\draw (y2) -- (y4);
\draw (y3) -- (y4);

\draw (v1) -- (v2);
\draw (v1) -- (v3);
\draw (v1) -- (v4);
\draw (v2) -- (v3);
\draw (v2) -- (v4);
\draw (v3) -- (v4);

\draw (v3) -- (w1);
\draw (v3) -- (x1);
\draw (v3) -- (y4);

\end{tikzpicture}
\end{center}
\end{minipage}\begin{minipage}[t]{0.5\textwidth}
\begin{center}
\begin{tikzpicture}

\node[fill, circle, inner sep=1.3pt, label=left:$x_0$] (v1) at (0,0) {};
\node[fill, circle, inner sep=1.3pt] (v2) at (-0.5,-1) {};
\node[fill, circle, inner sep=1.3pt] (v3) at (0,1.5) {};
\node[fill, circle, inner sep=1.3pt] (v4) at (1,2.5) {};
\node[fill, circle, inner sep=1.3pt] (v5) at (1,1) {};
\node[fill, circle, inner sep=1.3pt] (v6) at (2.5,1) {};
\node[fill, circle, inner sep=1.3pt] (v7) at (1.5,0) {};

\draw (v1) --(v2);
\draw (v1) --(v3);
\draw (v1) --(v5);
\draw (v1) --(v7);
\draw (v3) --(v4);
\draw (v5) --(v6);

\draw[dashed] (v1) --(v4);
\draw[dashed] (v1) --(v6);

\end{tikzpicture}
\end{center}
\end{minipage}
\caption{The graph $F(14,4)$ on the left and a member of $\mathcal{F}(7,4)$ on the right, where dashed lines are potential edges.}\label{fig1}
\end{figure}
The graph $G(n,\alpha)$ has exactly 
$\alpha-n\,{\rm mod}\,\alpha$ components of order
$\left\lfloor\frac{n}{\alpha}\right\rfloor$,
which implies
$$
\sharp\alpha(G(n,\alpha))=
g(n,\alpha):=
\left\lfloor\frac{n}{\alpha}\right\rfloor^{\alpha-(n\,{\rm mod}\,\alpha)}
\left\lceil\frac{n}{\alpha}\right\rceil^{n\,{\rm mod}\,\alpha}.$$
For $\frac{n}{\alpha}\geq 2$, we have that
$F(n,\alpha)-x_0$ is isomorphic to $G(n-1,\alpha)$, which implies 
\begin{eqnarray*}
\sharp\alpha(F(n,\alpha)) & = & f(n,\alpha)
:= g(n-1,\alpha)+
\left(\left\lfloor\frac{n}{\alpha}\right\rfloor-1\right)^{\alpha-n\,{\rm mod}\,\alpha}\left(\left\lceil\frac{n}{\alpha}\right\rceil-1\right)^{n\,{\rm mod}\,\alpha-1},
\end{eqnarray*}
where the term added to $g(n-1,\alpha)$ counts the maximum independent sets in $F(n,\alpha)$ that contain $x_0$.
For $\frac{n}{\alpha}<2$, 
the added term evaluates to $0$,
that is, $f(n,\alpha)$ equals $g(n-1,\alpha)$.
Furthermore, we obtain $\alpha\leq n-1\leq 2\alpha-2$, 
which implies that $G(n-1,\alpha)$ has 
$$\alpha- (n-1)\,{\rm mod}\,\alpha=\alpha-(n-1-\alpha)=2\alpha-n+1\geq 2$$ isolated vertices.
This implies that the vertex $x_0$ whose removal from a graph $G$ 
in ${\cal F}(n,\alpha)$ yields $G(n-1,\alpha)$ does not belong to 
any maximum independent set in $G$
for $\frac{n}{\alpha}<2$.
Hence, also in this case, we obtain
$$\sharp\alpha(G) =f(n,\alpha)= g(n-1,\alpha)$$
for every graph $G$ in ${\cal F}(n,\alpha)$.

Note that 
$$\sharp\alpha(C_5)=\sharp\alpha(F(5,2))=f(5,2)=5.$$
The following result is 
an immediate consequence of Zykov's generalization \cite{zy} 
of Tur\'{a}n's theorem \cite{tu}; see \cite{mora} for a simple proof.

\begin{theorem}\label{theorem1}
If $G$ is a graph of order $n$ and independence number $\alpha$ 
with $\alpha<n$, then
$\sharp\alpha(G)\leq g(n,\alpha)$
with equality if and only if $G$ is isomorphic to $G(n,\alpha)$.
\end{theorem}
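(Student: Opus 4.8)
The plan is to pass to complements and reduce the statement to Zykov's theorem. For a graph $G$ of order $n$ with $\alpha(G)=\alpha$, the maximum independent sets of $G$ are precisely the cliques of size $\alpha$ in the complement $\overline{G}$, and $\overline{G}$ is $K_{\alpha+1}$-free. Moreover $\overline{G(n,\alpha)}$ is, by definition, the Tur\'{a}n graph $T(n,\alpha)$: the complete $\alpha$-partite graph whose parts have sizes $\lceil n/\alpha\rceil$ (there are $n\bmod\alpha$ of these) and $\lfloor n/\alpha\rfloor$. Since a clique of size $\alpha$ in a complete $\alpha$-partite graph picks exactly one vertex from each part, $T(n,\alpha)$ has exactly $g(n,\alpha)$ of them. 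Hence it suffices to prove that, among all $K_{\alpha+1}$-free graphs of order $n$, the graph $T(n,\alpha)$ is the unique one with the maximum number of cliques of size $\alpha$.

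For the core of the argument I would use Zykov symmetrization. Write $c_H(u)$ for the number of $\alpha$-cliques of a graph $H$ through a vertex $u$. If $u,v$ are non-adjacent in $H$, let $H_{u\to v}$ arise from $H$ by deleting $v$ and adding a new vertex $v'$ with $N(v')=N_H(u)$. Two facts are easy to check: $H_{u\to v}$ is again $K_{\alpha+1}$-free, because every clique of $H_{u\to v}$ meets $\{u,v'\}$ in at most one vertex and hence corresponds to a clique of $H$ of the same size; and the number of $\alpha$-cliques changes by exactly $c_H(u)-c_H(v)$, because the $\alpha$-cliques avoiding $v'$ are exactly those of $H$ avoiding $v$, while $v'\mapsto u$ is a bijection between the $\alpha$-cliques through $v'$ and those of $H$ through $u$. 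In particular $H_{u\to v}$ has at least as many $\alpha$-cliques as $H$ whenever $c_H(u)\ge c_H(v)$, and strictly more when $c_H(u)>c_H(v)$.

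Next I would take a $K_{\alpha+1}$-free graph $H$ of order $n$ (the class being finite and nonempty) with the maximum number of $\alpha$-cliques and, among those, with the maximum number of edges, and show non-adjacency is transitive on $V(H)$. Otherwise there are $u,v,w$ with $uv,vw\notin E(H)$ and $uw\in E(H)$. If $c_H(u),c_H(v),c_H(w)$ are not all equal, then at least one of the non-adjacent pairs $\{u,v\}$, $\{v,w\}$ has distinct $c$-values; applying the symmetrization operation to it, cloning the endpoint with the larger value, strictly increases the number of $\alpha$-cliques --- a contradiction. So $c_H(u)=c_H(v)=c_H(w)$. Replacing $v$ by a twin of $u$, respectively of $w$, gives graphs with the same number of $\alpha$-cliques and $|E(H)|-d_H(v)+d_H(u)$, respectively $|E(H)|-d_H(v)+d_H(w)$, edges, so edge-maximality forces $d_H(u)\le d_H(v)$ and $d_H(w)\le d_H(v)$. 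On the other hand, replacing \emph{both} $u$ and $w$ by twins of $v$ yields a graph $H''$ which, by a short count using that no $\alpha$-clique of $H$ contains both $u$ and $w$ (itself forced by maximality), still has the maximum number of $\alpha$-cliques, but has $|E(H)|+2d_H(v)-d_H(u)-d_H(w)+1$ edges; edge-maximality now gives $2d_H(v)<d_H(u)+d_H(w)$, contradicting the previous two inequalities. Hence $H$ is complete multipartite; being $K_{\alpha+1}$-free with at least $g(n,\alpha)\ge 1$ cliques of size $\alpha$, it has exactly $\alpha$ parts, of sizes $n_1,\dots,n_\alpha$ summing to $n$, and exactly $\prod_i n_i$ cliques of size $\alpha$. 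Since $\prod_i n_i$ over positive integers of fixed sum $n$ is maximized precisely when the $n_i$ pairwise differ by at most $1$ (replacing $(n_i,n_j)$ by $(n_i-1,n_j+1)$ strictly increases the product as soon as $n_i\ge n_j+2$), we obtain $H\cong T(n,\alpha)$ and the maximum value $g(n,\alpha)$.

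This proves $\sharp\alpha(G)\le g(n,\alpha)$ and identifies $G(n,\alpha)$ as an extremal graph. The main obstacle is the remaining uniqueness: that \emph{every} extremal graph --- not merely the one singled out above by additionally maximizing the number of edges --- is isomorphic to $G(n,\alpha)$. I would handle this by pushing the same analysis further: in any extremal graph a ``bad triple'' $u,v,w$ still forces $c_H(u)=c_H(v)=c_H(w)$ and that no $\alpha$-clique uses both $u$ and $w$, and one then argues, using the symmetrization moves above together with a suitable monovariant (for instance the number of edges, or of false-twin pairs), that no bad triple can occur, so that every extremal graph is complete multipartite and hence, by the product computation, isomorphic to $T(n,\alpha)$. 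Alternatively, this uniqueness is exactly the uniqueness clause of Zykov's theorem \cite{zy}, for which a short self-contained proof is given in \cite{mora}. Complementing back translates the whole statement to $G$ and $G(n,\alpha)$.
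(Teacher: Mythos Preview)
Your approach---passing to complements and invoking Zykov's theorem---is exactly what the paper does: it states Theorem~\ref{theorem1} as an immediate consequence of Zykov's result, citing \cite{zy} and \cite{mora}, without giving any further argument. Your additional sketch of the symmetrization proof of Zykov's theorem goes beyond what the paper provides and is essentially correct, with the uniqueness clause handled, as you yourself note, by falling back on those same citations.
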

Our contribution in the present paper is 
the following connected version of Theorem \ref{theorem1},
which was recently conjectured by Derikvand and Oboudi \cite{deob}.

\begin{theorem}\label{theorem2}
If $G$ is a connected graph of order $n$ 
and independence number $\alpha$ 
with $\alpha<n$, then
$\sharp\alpha(G)\leq f(n,\alpha)$
with equality if and only if $G$ is isomorphic to a graph in ${\cal F}(n,\alpha)$.
\end{theorem}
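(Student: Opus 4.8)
The natural approach is induction on the order $n$, using Theorem \ref{theorem1} as the base/reference point. First I would dispose of the case $\frac{n}{\alpha} < 2$: here a connected graph $G$ of order $n$ with independence number $\alpha$ has at most one vertex $x_0$ outside some fixed maximum independent set structure, and since $\alpha \leq n-1 \leq 2\alpha - 2$, the graph $G - x_0$ has independence number $\alpha$ and order $n-1$, so $\sharp\alpha(G) \leq \sharp\alpha(G - x_0) \leq g(n-1,\alpha) = f(n,\alpha)$ by Theorem \ref{theorem1}; equality forces $G - x_0 \cong G(n-1,\alpha)$, which is exactly the definition of $\mathcal{F}(n,\alpha)$ in this regime. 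The bulk of the work is the case $\frac{n}{\alpha} \geq 2$.

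For $\frac{n}{\alpha} \geq 2$, the plan is to pick a well-chosen vertex $v$ and split maximum independent sets according to whether they contain $v$. Write $\sharp\alpha(G) = \sharp\alpha(G - v) + \sharp\alpha(G, v)$ (the second term counting those containing $v$), and note $\sharp\alpha(G,v) \leq \sharp\alpha(G - N[v])$ where $N[v]$ is the closed neighborhood, and this latter graph has independence number $\alpha - 1$. The idea is to choose $v$ to be a vertex of minimum degree, or more cleverly a vertex whose removal keeps things controlled, and then apply the induction hypothesis (or Theorem \ref{theorem1}) to $G - v$ and to $G - N[v]$. One must be careful: $G - v$ need not be connected, so one cannot directly apply Theorem \ref{theorem2} to it — instead one applies Theorem \ref{theorem1} to $G - v$ (giving $\sharp\alpha(G-v) \leq g(n-1,\alpha)$ if $\alpha(G-v) = \alpha$, or a smaller bound if the independence number drops) and separately bounds $\sharp\alpha(G,v)$. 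The combinatorial heart is showing $g(n-1,\alpha) + \big(\text{bound on sets through } v\big) \leq f(n,\alpha)$ with the extremal case pinned down, which requires that the vertex $v$ chosen is forced to look like a vertex in a smallest clique $C_i$, $i \neq 0$, of $F(n,\alpha)$.

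A cleaner route, which I would actually pursue, is to identify a cutvertex or a vertex whose deletion disconnects $G$, since the extremal graph $F(n,\alpha)$ is built around its special cutvertex $x_0$. If $G$ is $2$-connected one argues it cannot be extremal (a separate, hopefully short, argument showing $2$-connected graphs have strictly fewer maximum independent sets than $f(n,\alpha)$ when $\frac{n}{\alpha} \geq 2$, $(n,\alpha) \neq (5,2)$, with $C_5$ as the lone exception); if $G$ has a cutvertex $w$, decompose $G$ along $w$ into blocks/branches $B_1, \dots, B_k$ sharing only $w$, express $\sharp\alpha(G)$ multiplicatively in terms of $\sharp\alpha(B_j - w)$ and $\sharp\alpha(B_j, w)$ over the branches, and optimize. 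The recursive structure then lets the induction hypothesis apply to each branch (each is connected, of smaller order). The main obstacle I anticipate is the bookkeeping in the cutvertex decomposition: correctly combining the per-branch counts (the number of maximum independent sets of $G$ containing $w$ is $\prod_j \sharp\alpha(B_j, w)$ adjusted by size constraints, while those avoiding $w$ require choosing a maximum independent set in $G - w = \bigsqcup_j (B_j - w)$, and reconciling the global cardinality $\alpha$ across the branches), and then proving the resulting product/sum inequality is maximized precisely when the branches are cliques attached to $w$ as in $F(n,\alpha)$ — this is where a convexity or smoothing argument (moving vertices between cliques, as in proofs of Turán-type results) will be needed, and where the uniqueness part of the characterization is most delicate.
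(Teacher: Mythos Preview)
Your plan diverges from the paper's proof in its central mechanism. The paper does \emph{not} split into ``2-connected'' versus ``has a cutvertex'' cases. Instead it takes a minimal counterexample $G_0$, fixes a vertex $x$ lying in the most maximum independent sets, and applies the \emph{Moon--Moser operation}: repeatedly pick a non-cutvertex $y_i\in N_{G_0}[x]$ with $N[y_i]\neq N[x]$ and replace its neighbourhood by $N[x]$, turning $y_i$ into a true twin of $x$. This preserves connectivity, $\alpha$, and $\sharp\alpha$, and terminates in a graph $G_k$ in which every non-cutvertex of $N[x]$ is a true twin of $x$. After trimming superfluous edges between $N[x]$ and its complement, the paper then carries out a structural analysis (Claims~\ref{clm1} and~\ref{clm2}) that produces a very specific cutvertex $y'$ separating off a clique, and only \emph{then} applies induction. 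So the paper's induction is mediated by a normalisation step that your plan lacks.

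This matters because the step you label ``hopefully short'' --- showing that a $2$-connected graph with $n\geq 6$ and $n/\alpha\geq 2$ satisfies $\sharp\alpha(G)<f(n,\alpha)$ --- is not short, and you give no mechanism for it. The Moon--Moser normalisation is precisely what lets the paper avoid confronting arbitrary $2$-connected graphs. Separately, your treatment of the case $n/\alpha<2$ is not correct as written: picking $x_0$ outside \emph{one} fixed maximum independent set does not give $\sharp\alpha(G)\leq\sharp\alpha(G-x_0)$; for that you need $x_0$ to lie in \emph{no} maximum independent set, and you have not argued that such a vertex exists (indeed the paper invokes this only via Lemma~\ref{lemma3} once the hypothesis is already in hand). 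Finally, the cutvertex bookkeeping you flag as delicate really is: the maximum independent sets of $G$ do not factor as a simple product over branches because the branch contributions must sum to the global $\alpha$, and the paper's Claims~\ref{clm1}--\ref{clm2} plus the linear-programming argument around (\ref{e4}) are what make this tractable --- but only after the Moon--Moser step has tamed the structure.
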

In \cite{deob}, 
Derikvand and Oboudi verify Theorem \ref{theorem2}
for $\alpha\in\{1,2,n-3,n-2,n-1\}$,
that is, for very small and very large values of the independence number.
The maximum number of maximum/maximal independent sets has been studied in some further classes of graphs, 
and we refer the reader to \cite{joch,mora,sava,zi}.

The rest of the paper is devoted to the proof of our main result.

\section{Proof of Theorem \ref{theorem2}}

We begin with two preparatory lemmas.

\begin{lemma}\label{lemma3}
Let $G$ be a connected graph 
of order $n$ and independence number $\alpha$ with $\alpha<n$.
If some vertex $u$ of $G$ is contained 
in no maximum independent set in $G$, 
then $\sharp\alpha(G)\leq f(n,\alpha)$
with equality if and only if 
$G\in \mathcal{F}(n,\alpha)$.
\end{lemma}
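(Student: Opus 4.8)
The plan is to reduce to Theorem~\ref{theorem1} by deleting the useless vertex $u$ and carefully accounting for connectivity. Let $u$ be a vertex of $G$ lying in no maximum independent set, and set $H=G-u$. Since $u$ is in no maximum independent set of $G$, every maximum independent set of $G$ is a maximum independent set of $H$, so $\sharp\alpha(G)\le\sharp\alpha(H)$ and $\alpha(H)=\alpha$. Also $|V(H)|=n-1$ and $\alpha\le n-1$, so Theorem~\ref{theorem1} applies to $H$ and gives $\sharp\alpha(H)\le g(n-1,\alpha)$, with equality iff $H\cong G(n-1,\alpha)$. The first task is therefore to compare $g(n-1,\alpha)$ with $f(n,\alpha)$, splitting into the cases $\frac{n}{\alpha}\ge 2$ and $\frac{n}{\alpha}<2$; in the latter case $f(n,\alpha)=g(n-1,\alpha)$ by the discussion in the excerpt, and in the former case $f(n,\alpha)=g(n-1,\alpha)+(\text{positive term})>g(n-1,\alpha)$, so in both cases $\sharp\alpha(G)\le\sharp\alpha(H)\le g(n-1,\alpha)\le f(n,\alpha)$, establishing the inequality.

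For the equality characterization, suppose $\sharp\alpha(G)=f(n,\alpha)$. I would treat the two ranges of $\frac{n}{\alpha}$ separately. When $\frac{n}{\alpha}<2$: equality forces $\sharp\alpha(H)=g(n-1,\alpha)$, hence $H=G-u\cong G(n-1,\alpha)$, and then $G$ is a connected graph with a vertex $u$ whose deletion gives $G(n-1,\alpha)$, which is precisely the definition of $\mathcal{F}(n,\alpha)$ in this range; conversely every such $G$ has $\sharp\alpha(G)=f(n,\alpha)$ by the computation in the excerpt. When $\frac{n}{\alpha}\ge 2$: here $g(n-1,\alpha)<f(n,\alpha)$, so the chain $\sharp\alpha(G)\le\sharp\alpha(H)\le g(n-1,\alpha)$ can no longer be tight, and it looks like $G$ cannot attain $f(n,\alpha)$ at all under the hypothesis that some vertex is useless — \emph{unless} $(n,\alpha)=(5,2)$, where $G=C_5$ has every vertex useless (no vertex lies in all of its maximum independent sets, but in fact here one must check more carefully: in $C_5$ every vertex \emph{does} lie in some maximum independent set, so $C_5$ would not satisfy the hypothesis). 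I expect that for $\frac{n}{\alpha}\ge2$ the hypothesis ``some vertex is in no maximum independent set'' together with $\sharp\alpha(G)=f(n,\alpha)$ is simply impossible, so the conclusion $G\in\mathcal{F}(n,\alpha)$ holds vacuously; confirming this impossibility is the content I would need to nail down.

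The main obstacle is precisely this last point: ruling out, when $\frac{n}{\alpha}\ge 2$, that a connected graph with a useless vertex can have as many as $f(n,\alpha)=g(n-1,\alpha)+(\text{positive})$ maximum independent sets. The crude bound $\sharp\alpha(G)\le\sharp\alpha(G-u)\le g(n-1,\alpha)$ already shows $\sharp\alpha(G)\le g(n-1,\alpha)<f(n,\alpha)$, so in fact the argument is clean: equality $\sharp\alpha(G)=f(n,\alpha)$ is impossible in this range, and the lemma's ``if and only if'' is correct because its left side (equality) never occurs here while simultaneously $\mathcal{F}(n,\alpha)$ graphs all contain $x_0$ in some maximum independent set when $\frac{n}{\alpha}\ge 2$, so they never satisfy the hypothesis either — both sides of the biconditional are empty. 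The one genuine subtlety is the $(5,2)$ exceptional entry in $\mathcal{F}(5,2)$: I would check directly that in $C_5$ and in $F(5,2)$ every vertex lies in a maximum independent set, so neither is a counterexample, and the case analysis above goes through. Thus the write-up is: (i) $H=G-u$, apply Theorem~\ref{theorem1}; (ii) compare $g(n-1,\alpha)$ and $f(n,\alpha)$ by the two cases; (iii) in the case $\frac{n}{\alpha}<2$ read off $G\in\mathcal{F}(n,\alpha)$ from the definition; (iv) in the case $\frac{n}{\alpha}\ge 2$ observe equality is unattainable, so the characterization is vacuous.
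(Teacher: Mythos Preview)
Your approach is correct and essentially identical to the paper's: delete the useless vertex $u$, apply Theorem~\ref{theorem1} to $G-u$, and compare $g(n-1,\alpha)$ with $f(n,\alpha)$ according to whether $\frac{n}{\alpha}\ge 2$ or $\frac{n}{\alpha}<2$. Two small remarks: first, you actually have $\sharp\alpha(G)=\sharp\alpha(G-u)$, not just $\le$, since any maximum independent set of $G-u$ is an independent set of size $\alpha$ in $G$; second, your extended discussion of vacuity in the $\frac{n}{\alpha}\ge 2$ case is unnecessary for the converse direction of the biconditional, because the introduction already records $\sharp\alpha(G)=f(n,\alpha)$ for every $G\in\mathcal{F}(n,\alpha)$ regardless of the hypothesis on $u$.
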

\begin{proof}
By the hypothesis and Theorem \ref{theorem1},
we obtain $\sharp\alpha(G)=\sharp\alpha(G-u)\leq g(n-1,\alpha)$ 
with equality if and only if $G-u$ is isomorphic to $G(n-1,\alpha)$.
It follows that 
$\sharp\alpha(G)\leq g(n-1,\alpha)
\leq f(n,\alpha)$,
and that $\sharp\alpha(G)=f(n,\alpha)$ holds
if and only if 
$G-u$ is isomorphic to $G(n-1,\alpha)$,
and
$g(n-1,\alpha)=f(n,\alpha)$.
Since $g(n-1,\alpha)=f(n,\alpha)$ implies $\frac{n}{\alpha}<2$,
the definition of ${\cal F}(n,\alpha)$ for $\frac{n}{\alpha}<2$ implies that 
$\sharp\alpha(G)=f(n,\alpha)$
holds if and only if $G\in\mathcal{F}(n,\alpha)$. 
\end{proof}
The second lemma concerns graphs 
whose structure is similar to 
the structure of the graphs in ${\cal F}(n,\alpha)$.

\begin{lemma}\label{lemma2}
Let $n$ and $\alpha$ be positive integers with $\alpha<n$.
\begin{enumerate}[(i)]
\item Let $G$ be a connected graph 
of order $n$ and independence number $\alpha$, 
whose vertex set is the disjoint union of 
the vertex sets of $\alpha$ cliques $C_0,\ldots,C_{\alpha-1}$.
Let all edges of $G$ 
that do not lie in one of these cliques 
be incident with a vertex $x_0$ in $C_0$,
and let $x_0$ have exactly one neighbor 
in each of the cliques $C_1,\ldots,C_{\alpha-1}$.

Under these assumptions 
$\sharp\alpha(G)\leq f(n,\alpha)$
with equality if and only if 
$G$ is isomorphic to $F(n,\alpha)$.
\item Let the graph $G'$ arise from $F(n,\alpha)$
by adding an edge $uv$ between two non-adjacent vertices of $F(n,\alpha)$. 
\subitem If $\frac{n}{\alpha}\geq 2$, 
then $\alpha(G')=\alpha$ and $\sharp\alpha(G')<f(n,\alpha)$,
and,
\subitem if $\frac{n}{\alpha}<2$, and $u$ and $v$ are distinct from the special cutvertex $x_0$ of $F(n,\alpha)$,
then  
\subsubitem either $\alpha(G')<\alpha$ 
\subsubitem or $\alpha(G')=\alpha$ and $\sharp\alpha(G')<f(n,\alpha)$.
\end{enumerate}
\end{lemma}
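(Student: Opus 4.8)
\medskip

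The plan is to prove part (i) by a direct counting argument, splitting maximum independent sets according to whether they contain the vertex $x_0$. Write $q=\left\lceil\frac{n}{\alpha}\right\rceil$ and $p=\left\lfloor\frac{n}{\alpha}\right\rfloor$, and let $r=n\bmod\alpha$, so that $r$ of the cliques have order $q$ and $\alpha-r$ have order $p$. Since the $C_i$ partition $V(G)$ into $\alpha$ cliques and $\alpha(G)=\alpha$, every maximum independent set picks exactly one vertex from each clique. For a maximum independent set $I$ not containing $x_0$: the choice in $C_0$ can be any of $|C_0|-1$ vertices, and the choice in each $C_i$ ($i\geq 1$) can be any of its $|C_i|$ vertices, giving at most $(|C_0|-1)\prod_{i\geq 1}|C_i|$ such sets. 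Because $x_0$ has exactly one neighbor in each $C_i$, a maximum independent set containing $x_0$ must avoid that one neighbor, leaving $|C_i|-1$ choices in each $C_i$ for $i\geq 1$, so there are at most $\prod_{i\geq 1}(|C_i|-1)$ such sets. Summing, $\sharp\alpha(G)$ is bounded by a product-type expression in the $|C_i|$; a short convexity/AM–GM-style argument (the product of the $|C_i|$ with $\sum|C_i|=n$ fixed is maximized when the sizes are as balanced as possible, i.e.\ the Tur\'an split, and likewise for the shifted product $\prod(|C_i|-1)$) shows this bound is maximized exactly at the balanced clique sizes, and that $G$ having balanced clique sizes with $x_0$'s neighbors as described is exactly $F(n,\alpha)$. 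For equality one also needs that \emph{every} candidate set is genuinely independent, which holds precisely when the only edges outside the cliques are the $x_0x_i$, forcing $G\cong F(n,\alpha)$. A subtlety: when some $|C_i|=1$ the shifted term vanishes or the special cutvertex is not unique; but $\frac{n}{\alpha}\ge2$ would not be assumed in (i), so one should note that if some clique is trivial the count only drops, and handle the small cases where $F(n,\alpha)$ itself has $|C_i|=1$ directly. The main obstacle in part (i) is making the two-variable optimization (over the multiset of clique sizes, for the sum of two products) fully rigorous while tracking the equality case; I expect a smoothing argument — replace sizes $(a,b)$ with $a\ge b+2$ by $(a-1,b+1)$ and check both $\prod|C_i|$ and $\prod(|C_i|-1)$ strictly increase — to be cleanest.

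\medskip

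For part (ii) the plan is to add the single edge $uv$ to $F(n,\alpha)$ and argue case by case on where $u,v$ lie. First suppose $\frac{n}{\alpha}\ge2$. Adding $uv$ cannot drop the independence number below $\alpha$: indeed $G(n-1,\alpha)=F(n,\alpha)-x_0$ is a disjoint union of $\alpha$ cliques each of order $\ge1$, and since each clique of $F(n,\alpha)$ has order $\ge2$, there is still an independent transversal of the $\alpha$ cliques avoiding both $u$ and $v$ (we can always pick, in each clique, a vertex other than a prescribed one), so $\alpha(G')=\alpha$. For the strict inequality, observe $G'$ still has the property that every maximum independent set picks one vertex per clique $C_i$, and compare with the counting in part (i): adding $uv$ only forbids more transversals, so $\sharp\alpha(G')\le\sharp\alpha(F(n,\alpha))=f(n,\alpha)$, and to get strictness one exhibits one maximum independent set of $F(n,\alpha)$ destroyed by $uv$. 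If $u,v$ lie in the same clique this is immediate. If they lie in different cliques, at least one of $u,v$ is distinct from $x_0$, say $u\ne x_0$ and $u\in C_i$ with $i\ge1$; then some maximum independent set of $F(n,\alpha)$ not containing $x_0$ uses $u$ together with a vertex of $C_j\ni v$, and can be chosen to use $v$ as well because $|C_j|\ge2$ — this set is independent in $F(n,\alpha)$ but not in $G'$. (When one of $u,v$ equals $x_0$, use a maximum independent set through $x_0$: it already avoids $x_0$'s old neighbor in the other clique, and since that clique has order $\ge2$ we may route it through $v$.) Hence $\sharp\alpha(G')<f(n,\alpha)$.

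\medskip

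Now suppose $\frac{n}{\alpha}<2$ and $u,v\neq x_0$. Here $F(n,\alpha)-x_0=G(n-1,\alpha)$ has every clique of order $1$ or $2$, and as computed in the excerpt it has $2\alpha-n+1\ge2$ isolated vertices (trivial cliques); also $x_0\notin$ any maximum independent set of $F(n,\alpha)$, so $f(n,\alpha)=g(n-1,\alpha)$ and the maximum independent sets of $F(n,\alpha)$ are exactly the transversals of $C_1,\dots,C_{\alpha-1}$ together with the $\le1$ choices inside $C_0\setminus\{x_0\}$ — equivalently the maximum independent sets of $G(n-1,\alpha)$. Adding $uv$ with $u\in C_i$, $v\in C_j$, $i,j\ge1$ (or $i=0$ with $u\ne x_0$). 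If $uv$ lies inside one clique, that clique had order $2$, becomes nothing new, and $\alpha(G')<\alpha$; done. If $u,v$ lie in distinct cliques $C_i,C_j$, then in $G'$ a maximum independent set still picks at most one vertex per clique, and those through $u$ must avoid $v$ and vice versa. If after adding $uv$ we still have $\alpha(G')=\alpha$, then as before the bound $\sharp\alpha(G')\le g(n-1,\alpha)$ holds and is strict because some transversal of $G(n-1,\alpha)$ using both $u$ and $v$ exists (each of $C_i\setminus\{x_0\text{-neighbor}\}$, $C_j$ has a free vertex, using that there are spare isolated vertices to absorb the transversal elsewhere) and is killed by $uv$. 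So either $\alpha(G')<\alpha$, or $\alpha(G')=\alpha$ and $\sharp\alpha(G')<f(n,\alpha)$, as claimed. The main obstacle in (ii) is bookkeeping the case split on the positions of $u,v$ relative to $x_0$ and the cliques while exhibiting, in each case, one explicit maximum independent set that the new edge destroys; I do not expect any deep difficulty beyond care.
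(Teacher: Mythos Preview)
Your plan for (i) follows the same route as the paper: write
\[
\sharp\alpha(G)=(n_0-1)\prod_{i\ge 1}n_i+\prod_{i\ge 1}(n_i-1),
\]
then optimize over the multiset of clique sizes by smoothing. The paper also uses exactly this smoothing idea. However, your stated check --- ``both $\prod|C_i|$ and $\prod(|C_i|-1)$ strictly increase under replacing $(a,b)$ with $a\ge b+2$ by $(a-1,b+1)$'' --- does not match the objective: the formula above is \emph{not} symmetric in $n_0$ versus $n_1,\ldots,n_{\alpha-1}$, since $n_0$ enters only through the factor $n_0-1$ in the first term and not at all in the second. So a generic balancing argument on the full symmetric products $\prod_{i\ge 0}n_i$ and $\prod_{i\ge 0}(n_i-1)$ is not what is being maximized, and your smoothing as written would leave open which clique should be $C_0$. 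The paper handles this asymmetry explicitly: assuming $n_1\ge\cdots\ge n_{\alpha-1}$, it moves a vertex from $C_1$ to $C_0$ to show $n_0\ge n_1$, and from $C_0$ to $C_{\alpha-1}$ to show $n_{\alpha-1}\ge n_0-1$; each of these two computations is short but genuinely uses the special role of $n_0$. You should replace the generic AM--GM remark by these two directed moves (or, equivalently, first smooth among $n_1,\ldots,n_{\alpha-1}$ where your argument is valid, and then separately argue that swapping $n_0$ with a larger $n_j$ increases the count). The paper also disposes of the boundary case $n_0=1$ or $n_{\alpha-1}=1$ via Lemma~\ref{lemma3} rather than ad hoc, which is cleaner than ``handle the small cases directly''.

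For (ii) the paper simply writes ``We leave the simple proof of this to the reader'', so your case analysis is already more than what the paper supplies; the plan you give (exhibit one maximum independent set of $F(n,\alpha)$ through both $u$ and $v$, treating separately whether one of them is $x_0$) is correct.
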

\begin{proof} (i)
If $C_i$ has order $n_i$ for $i$ in $\{ 0,\ldots,\alpha-1\}$,
then
\begin{eqnarray*}
\sharp\alpha(G)&=&(n_0-1)\prod_{k=1}^{\alpha-1} n_k+\prod_{k=1}^{\alpha-1} (n_k-1)=\prod_{k=0}^{\alpha-1} n_k-\prod_{k=1}^{\alpha-1} n_k+\prod_{k=1}^{\alpha-1} (n_k-1).
\end{eqnarray*}
In view of the desired statement, 
we may assume that the $n_i$ are such that 
$\sharp\alpha(G)$ is as large as possible.
By symmetry, we may assume 
$n_1\geq\ldots\geq n_{\alpha-1}$.
In order to complete the proof,
it suffices to show that $n_0\geq n_1$ and $n_{\alpha-1}\geq n_0-1$.

If $n_i=1$ for some $i\in\{ 0,\alpha-1\}$,
then every maximum independent set in $G$ 
contains the unique vertex, say $u$, in $C_i$.
It follows that some neighbor, say $v$, of $u$
belongs to no maximum independent set in $G$,
and Lemma \ref{lemma3} implies the desired statement.
Hence, we may assume $n_0,n_{\alpha-1}\geq 2$.

First, we suppose that that $n_0+1\leq n_1$. 
Moving one vertex from $C_1$ to $C_0$ 
results in a graph $G'$ 
of order $n$ and independence number $\alpha$
with 
\begin{eqnarray*}
\sharp\alpha(G')&=&n_0\left(\frac{n_1-1}{n_1}\right)\prod_{k=1}^{\alpha-1} n_k+\left(\frac{n_1-2}{n_1-1}\right)\prod_{k=1}^{\alpha-1} (n_k-1)\\
&=&\prod_{k=0}^{\alpha-1} n_k-n_0\prod_{k=2}^{\alpha-1} n_k+\prod_{k=1}^{\alpha-1} (n_k-1)-\prod_{k=2}^{\alpha-1} (n_k-1).\end{eqnarray*}
Since 
\begin{eqnarray*}
\sharp\alpha(G')-\sharp\alpha(G)&=&\prod_{k=1}^{\alpha-1} n_k-n_0\prod_{k=2}^{\alpha-1} n_k-\prod_{k=2}^{\alpha-1} (n_k-1)\\
&=&(n_1-n_0)\prod_{k=2}^{\alpha-1} n_k-\prod_{k=2}^{\alpha-1} (n_k-1)\\ 
&>&0,
\end{eqnarray*}
we obtain a contradiction to the choice of the $n_i$.

Next, we suppose that $n_{\alpha-1}\leq n_0-2$. 
Moving a vertex from $C_0$ to $C_{\alpha-1}$ 
results in a graph $G'$ 
of order $n$ and independence number $\alpha$
with 
\begin{eqnarray*}
\sharp\alpha(G')&=&(n_0-1)\frac{(n_0-2)(n_{\alpha-1}+1)}{(n_0-1)n_{\alpha-1}}\prod_{k=1}^{\alpha-1} n_k+\left(\frac{n_{\alpha-1}}{n_{\alpha-1}-1}\right)\prod_{k=1}^{\alpha-1} (n_k-1).
\end{eqnarray*}
Since $\frac{(n_0-2)(n_{\alpha-1}+1)}{(n_0-1)n_{\alpha-1}}>1$
and $\frac{n_{\alpha-1}}{n_{\alpha-1}-1}>1$,
we obtain 
$\sharp\alpha(G')>\sharp\alpha(G)$,
which is a contradiction to the choice of the $n_i$,
and completes the proof of (i).\\[3mm]
(ii) We leave the simple proof of this to the reader.
\end{proof}
We proceed to the proof of our main result.

\begin{proof}[Proof of Theorem \ref{theorem2}]
Suppose, for a contradiction, that the theorem fails,
and that $n$ is the smallest order 
of a counterexample $G_0$,
which has independence number $\alpha$.
Since the result is easily verified for $n\leq 5$ or $\alpha=1$,
we may assume that $n\geq 6$ and $\alpha\geq 2$.
Furthermore, we may assume that the connected graph $G_0$
maximizes $\sharp\alpha(G_0)$
among all connected graphs of order $n$ 
and independence number $\alpha$.
Since $G_0$ is a counterexample, 
we have 
\begin{itemize}
\item either $\sharp\alpha(G_0)> f(n,\alpha)$
\item or $\sharp\alpha(G_0)=f(n,\alpha)$
but $G_0\not\in\mathcal{F}(n,\alpha)$.
\end{itemize}
For the rest of the proof,
let the vertex $x$ of $G_0$ maximize $\sharp\alpha(G_0,x)$,
that is, $x$ is contained in the maximum number 
of maximum independent sets in $G_0$. 
Let the set $N$ be the closed neighborhood $N_{G_0}[x]$ of $x$ in $G_0$.

Applying the so-called {\it Moon-Moser operation},
we recursively construct a finite sequence of graphs
$$G_0,\ldots,G_k$$
such that, for every $i\in\{ 0,1,\ldots,k\}$,
\begin{itemize}
\item $G_i$ is a connected graph with vertex set $V(G_0)$,
\item $N_{G_i}[x]=N$,
\item $G_i$ has independence number $\alpha$,
\item $\sharp\alpha(G_i)=\sharp\alpha(G_0)$, and
\item $\sharp\alpha(G_0,x)=\sharp\alpha(G_i,x)\geq \sharp\alpha(G_i,u)$
for every vertex $u\in N$.
\end{itemize}
Trivially, $G_0$ has all these properties.

Now, suppose that $G_{i-1}$ has been constructed 
for some positive integer $i$, 
and that $N$ contains a vertex $y_i$ such that
$y_i$ is not a cutvertex of $G_{i-1}$,
and $N_{G_{i-1}}[y_i]\not=N$.
In this case, we construct a further graph $G_i$ in the sequence
by removing all edges incident with $y_i$ in $G_{i-1}$,
and adding new edges between $y_i$ 
and all vertices of $N\setminus \{ y_i\}$, 
that is, we turn $y_i$ into a {\it true twin} of $x$.
If no such vertex exists, the sequence terminates with $G_{i-1}$.

Since $G_{i-1}$ is connected, and $y_i$ is not a cutvertex of $G_{i-1}$,
the graph $G_i$ is connected.
By construction, $N_{G_i}[x]=N_{G_{i-1}}[x]=N$.
Since a maximum independent set in $G_{i-1}$ that contains $x$
is also an independent set in $G_i$, we have $\alpha(G_i)\geq \alpha$.
If some independent set $I$ in $G_i$ contains more than $\alpha$
vertices, then $I$ necessarily contains $y_i$, 
and no other vertex from $N=N_{G_i}[y_i]=N_{G_{i-1}}[x]$,
which implies the contradiction that $(I\setminus \{ y_i\})\cup \{ x\}$
is an independent set in $G_{i-1}$ with more than $\alpha$ elements.
Hence, $G_i$ has independence number $\alpha$.
By construction, 
$$\sharp\alpha(G_i)=\sharp\alpha(G_{i-1})
-\sharp\alpha(G_{i-1},y_i)
+\sharp\alpha(G_{i-1},x)
\geq \sharp\alpha(G_{i-1})=\sharp\alpha(G_0),$$
and the choice of $G_0$ implies 
$\sharp\alpha(G_i)=\sharp\alpha(G_0)$.
Similarly, by construction, 
$$\sharp\alpha(G_i,x)=\sharp\alpha(G_{i-1},x)=\sharp\alpha(G_0,x)
\,\,\,\,\,\,\,\mbox{ and }\,\,\,\,\,\,\,
\sharp\alpha(G_i,y_i)=\sharp\alpha(G_i,x).$$
Now, let $u\in N\setminus \{ x,y_i\}$.
Since every independent set in $G_i$ that contains $u$
does not contain $y_i$, 
it is also an independent set in $G_{i-1}$,
which implies 
$$\sharp\alpha(G_i,u)\leq \sharp\alpha(G_{i-1},u)
\leq 
\sharp\alpha(G_{i-1},x)=\sharp\alpha(G_0,x)=\sharp\alpha(G_i,x).$$
Altogether, we established the desired properties for $G_i$.

The final graph in the sequence $G_k$ 
has the additional property that 
$N_{G_k}[y]=N$
for every vertex $y$ in $N$
that is not a cutvertex of $G_k$.
Let the graph $G$ arise from $G_k$ by removing
iteratively as long as possible
one by one 
edges between $N$ and $V(G_0)\setminus N$ 
such that the resulting graph remains connected, 
and still has independence number $\alpha$.
Since the independence number does not change,
we obtain $\sharp\alpha(G)\geq \sharp\alpha(G_k)=\sharp\alpha(G_0)$,
and the choice of $G_0$ implies 
$$\sharp\alpha(G)=\sharp\alpha(G_0),$$
that is, the removal of the edges in $E(G_0)\setminus E(G)$
does not lead to any new maximum independent set.

\begin{claim}\label{clm0}
$G$ is isomorphic to a graph in ${\cal F}(n,\alpha)$.
\end{claim}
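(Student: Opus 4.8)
The plan is to split into two cases according to whether or not $G$ has a vertex that lies in no maximum independent set.

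\textbf{Case 1: some vertex of $G$ lies in no maximum independent set.} Here Lemma \ref{lemma3} applies directly and yields $\sharp\alpha(G)\le f(n,\alpha)$ with equality if and only if $G\in\mathcal{F}(n,\alpha)$. Since $\sharp\alpha(G)=\sharp\alpha(G_0)$ and $\sharp\alpha(G_0)\ge f(n,\alpha)$ (as $G_0$ is a counterexample, its number of maximum independent sets is at least $f(n,\alpha)$), equality holds and $G\in\mathcal{F}(n,\alpha)$, which is the claim.

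\textbf{Case 2: every vertex of $G$ lies in some maximum independent set.} Write $D=V(G)\setminus N$. The first step is to read off the structure of $N$ from the construction. Since every non-cutvertex of $G_k$ lying in $N$ is a true twin of $x$, and $G$ arises from $G_k$ only by deleting edges between $N$ and $D$, the set $T:=\{v\in N:N_G[v]=N\}$ is a clique, contains $x$, every vertex of $T$ is adjacent to every other vertex of $N$ and has no neighbour in $D$, and $N\setminus T$ consists of cutvertices of $G_k$. Because $x$ has no neighbour in $D$ and lies in a maximum independent set, $\alpha(G[D])=\alpha-1$: it is at least $\alpha-1$, and it cannot equal $\alpha$, since an independent set of size $\alpha$ in $G[D]$ together with $x$ would be independent in $G$. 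Consequently $D\neq\emptyset$, every maximum independent set of $G$ meets $N$, and the maximum independent sets of $G$ split into those containing a (then unique) vertex of $T$ — of which there are exactly $|T|\cdot\sharp\alpha(G[D])$ — and those meeting only $N\setminus T$.

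The heart of the proof is then to show, in Case 2, that $N$ is a clique, that $G[D]$ has exactly $\alpha-1$ components, each a clique, and that the unique vertex of $N$ with a neighbour in $D$ has exactly one neighbour in each of these components. I would obtain this by combining Theorem \ref{theorem1} applied to $G[D]$ and to its components (bounding $\sharp\alpha(G[D])$ and forcing the components to be cliques in the extremal case); the minimality of the edge deletions defining $G$, so that each surviving edge between $N$ and $D$ either is a bridge or its removal increases the independence number; the optimality of $G_0$, i.e.\ that no connected graph of order $n$ and independence number $\alpha$ has more maximum independent sets, which—as in the proof of Lemma \ref{lemma2}(i)—forbids unbalanced clique sizes and more than one attachment vertex; and, whenever $G[D]$ is connected, the induction hypothesis of the proof applied to $G[D]$, whose order is smaller than $n$. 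The main obstacle I expect is precisely this step: controlling the vertices of $N\setminus T$ (which need not be pairwise adjacent and may send several edges into $D$) and the multiplicities of the edges between $N$ and $D$, and showing that any deviation from the $F$-structure would allow one to build a connected graph of the same order and independence number with strictly more maximum independent sets, contradicting the choice of $G_0$.

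Once this structure is in place, $G$ satisfies the hypotheses of Lemma \ref{lemma2}(i) with $C_0=N$ and with the attachment vertex in the role of $x_0$. Hence $\sharp\alpha(G)\le f(n,\alpha)$ with equality if and only if $G$ is isomorphic to $F(n,\alpha)$. Since $\sharp\alpha(G)=\sharp\alpha(G_0)\ge f(n,\alpha)$, equality holds, so $G\cong F(n,\alpha)$; and $F(n,\alpha)\in\mathcal{F}(n,\alpha)$, which proves the claim.
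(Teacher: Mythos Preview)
Your Case~1 is correct and matches the paper. The trouble is entirely in Case~2: what you have written is a plan, not a proof, and you say so yourself (``The main obstacle I expect is precisely this step\ldots''). The paper's argument really does live in that gap, and your proposed route does not close it.

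Two concrete problems. First, your target structure is off: you aim to show that $N$ is a clique with a single attachment vertex to $D$, so as to feed directly into Lemma~\ref{lemma2}(i) with $C_0=N$. But nothing forces $N$ to be a clique; the cutvertices of $G$ in $N$ (your $N\setminus T$) need not be pairwise adjacent, and in the extremal examples the special cutvertex $x_0$ is typically \emph{not} the vertex $x$, so $N$ is not one of the cliques $C_i$. Lemma~\ref{lemma2}(i) does not apply with $C_0=N$. Second, your suggested tools are too blunt: applying Theorem~\ref{theorem1} to $G[D]$ only bounds $\sharp\alpha(G[D])$ and says nothing about how the cutvertices in $N\setminus T$ attach; and your induction on $G[D]$ ``whenever $G[D]$ is connected'' is essentially never available, since in $F(n,\alpha)$ the graph $G[D]$ is a disjoint union of cliques.

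The paper's proof proceeds differently. It first locates a cutvertex $y\in N$ with a private component $C$ of order at least~$2$ and a \emph{single} edge into $C$ (this is Claim~\ref{clm1}, proved by a careful case analysis using the minimality of the edge set of $G$). It then uses the bridge $yz$ to split $\sharp\alpha(G)$ as $\sharp\alpha(C-z)\cdot\sharp\alpha(G-V(C),y)+\sharp\alpha(C)\cdot\sharp\alpha(G-(V(C)\cup\{y\}))$ and bounds the two factors via Theorem~\ref{theorem1} and the \emph{inductive hypothesis on $G-V(C)$} (which has smaller order), together with a small linear program, to force $G-V(C)\in\mathcal{F}(n-n(C),\alpha-\alpha(C))$. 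This yields Claim~\ref{clm2}: a cutvertex $y'$ with a clique on one side and a single neighbour $x'$ on the other. Only then does the induction on $G-(V(C')\cup\{y'\})$ and Lemma~\ref{lemma2} finish the job. The key idea you are missing is this two-step localisation of a ``good'' cutvertex (Claims~\ref{clm1} and~\ref{clm2}) that makes the induction go through; trying to control all of $N\setminus T$ at once, as you propose, does not.
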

\begin{proof}[Proof of Claim \ref{clm0}]
If some vertex of $G$ is contained in no maximum independent set in $G$, then, by Lemma \ref{lemma3},
$f(n,\alpha)\geq \sharp\alpha(G)=\sharp\alpha(G_0)\geq f(n,\alpha)$,
which implies $\sharp\alpha(G)=f(n,\alpha)$.
Again by Lemma \ref{lemma3},
we obtain $G\in {\cal F}(n,\alpha)$.
Hence, we may assume that 
$$\mbox{\it every vertex of $G$ 
belongs to some maximum independent set in $G$.}$$
Let $B$ be the set of cutvertices of $G$ in $N$.
Note that the set $N\setminus B$ contains $x$,
and that all vertices in $N\setminus B$ are true twins of $x$.
Since $G$ is connected, $x$ is contained in some maximum independent set in $G$,
and $\alpha\geq 2$,
the set $B$ is not empty.
A component $C$ of $G-N$ 
for which only one vertex $y$ in $B$ has neighbors in $V(C)$
is a {\it private component of $y$}.
Since every vertex in $B$ is a cutvertex, 
every such vertex has at least one private component.

In order to complete the proof of Claim \ref{clm0},
we insert two further claims.

\begin{claim}\label{clm1}
There is a vertex $y$ in $B$, and a private component $C$ of $y$
such that $C$ has order at least $2$,
and $y$ has exactly one neighbor in $V(C)$.
\end{claim}
\begin{proof}[Proof of Claim \ref{clm1}]
First, we assume that there is a vertex $y$ in $B$ 
as well as a private component $C$ of $y$ 
such that $C$ has order at least $2$.
In view of the desired statement, we may assume 
that $y$ has more than one neighbor in $V(C)$.
Let $z$ be a neighbor of $y$ in $V(C)$.
Since $yz$ is not a bridge in $G$, 
the construction of $G$ implies that 
$G-yz$ has an independent set $I$ of order $\alpha+1$.
Clearly, the set $I$ contains $y$ and $z$.
If $y$ is the only vertex of $B$ in $I$,
than $(I\setminus \{ y\})\cup \{ x\}$
is an independent set in $G$ of order $\alpha+1$,
which is a contradiction.
Hence, $I$ contains more than one vertex from $B$.
If, for every vertex $y'$ in $(I\cap B)\setminus \{ y\}$,
there is some private component $C'$ of $y'$ such that 
$|I\cap C'|<\alpha(C')$,
then the union of $\{ x\}$ 
and maximum independent sets in the components of $G-N$
is an independent set in $G$
that is at least as large as $I$,
which is a contradiction.
Hence,
there is some vertex $y'$ in $(I\cap B)\setminus \{ y\}$
such that $|I\cap C'|=\alpha(C')$
for every private component $C'$ of $y'$.
Let $C'$ be a private component of $y'$.
Since $y'\in I$ and $I$ intersects $V(C')$,
the component $C'$ has order at least $2$.
Since $y'\in I$, and $|I\cap C'|=\alpha(C')$, 
the removal of an edge between $y'$ and a vertex in $C'$
does not increase the independence number.
Therefore, by the construction of $G$,
the vertex $y'$ has exactly one neighbor in $C'$,
and the desired statement follows
for $y'$ and $C'$.

Next, we assume that all private components 
have order exactly $1$.
Since every vertex of $G$ belongs to some maximum independent set in $G$,
there is a maximum independent set $I$ in $G$
that intersects $B$.
Now, if $I$ contains a vertex $y$ from $B$,
then $I$ contains no vertex from any private component of $y$.
Therefore, removing from $I$ all vertices from $B$,
and adding $x$ as well as all vertices of all private components
yields an independent set in $G$ that is larger than $I$,
which is a contradiction.
This completes the proof of Claim \ref{clm1}.
\end{proof}
For the rest of the proof, 
let $y\in B$, and a private component $C$ of $y$ 
be as in Claim \ref{clm1}.

Let $z$ be the unique neighbor of $y$ in $C$.

\begin{claim}\label{clm2}
The graph $G$ has a cutvertex $y'$ such that 
\begin{itemize}
\item $G-y'$ has exactly two components $C'$ and $C''$,
\item $C'$ is a clique,
\item $y'$ is adjacent to every vertex of $C'$, and 
\item $y'$ has exactly one neighbor in $C''$.
\end{itemize}
\end{claim}
\begin{proof}[Proof of Claim \ref{clm2}]
If $\alpha(C)=1$, then $y'=z$ has the desired properties.
Hence, we may assume that $\alpha(C)\geq 2$.

First, we assume that 
$\alpha(C)+\alpha(G-V(C))>\alpha$,
which implies that 
every maximum independent set in $G$
contains either $y$ or $z$, but, trivially, not both.
Since $y$ and $z$ both have degree at least $2$,
and $g(n,\alpha)$ is increasing in $n$, we obtain
\begin{eqnarray*}
\sharp\alpha(G)&=&
\sharp\alpha(G-N_G[y])
+\sharp\alpha(G-N_G[z])
\leq 2g(n-3,\alpha-1).
\end{eqnarray*}
Let the connected graph $G'$ 
of order $n$ and independence number $\alpha$
arise from $G(n-3,\alpha-1)$
by adding a clique $K$ of order $3$,
and edges between one vertex in $K$ 
and one vertex in each component of $G(n-3,\alpha-1)$.
If $\frac{n-3}{\alpha-1}\geq 2$,
then every component of $G(n-3,\alpha-1)$ has order at least $2$,
which implies that $G'$ has strictly more than $2g(n-3,\alpha-1)$ 
maximum independent sets.
In this case,
Lemma \ref{lemma2} implies the contradiction
$$\sharp\alpha(G)\leq 2g(n-3,\alpha-1)<
\sharp\alpha(G')\leq f(n,\alpha).$$
If $\frac{n-3}{\alpha-1}<2$,
then $\sharp\alpha(G')=2g(n-3,\alpha-1)$,
because one component of $G(n-3,\alpha-1)$ has order $1$.
Since $K$ has order $3$, 
Lemma \ref{lemma2} implies 
$\sharp\alpha(G')<f(n,\alpha)$,
that is, also in this case
we obtain the contradiction
$$\sharp\alpha(G)<f(n,\alpha).$$
Hence, we may assume that
$\alpha(C)+\alpha(G-V(C))=\alpha$.

Let $I_y$ and $I_z$ be maximum independent sets in $G$
that contain $y$ and $z$, respectively.
Clearly,
\begin{eqnarray*}
|I_y\cap V(C)|&\leq& \alpha(C),\\
|I_z\cap V(C)|&\leq& \alpha(C),\\
|I_y\cap (V(G)\setminus V(C))|&\leq& \alpha(G-V(C)),\mbox{ and}\\
|I_z\cap (V(G)\setminus V(C))|&\leq& \alpha(G-V(C)).
\end{eqnarray*}
Since $|I_y|=|I_z|=\alpha=\alpha(C)+\alpha(G-V(C))$,
these four inequalities all hold with equality,
that is,
$C$ has a maximum independent set containing $z$ and another one not containing $z$,
and 
$G-V(C)$ has a maximum independent set containing $y$ and another one not containing $y$.

By Theorem \ref{theorem1}, and the choice of $n$,
we obtain
\begin{eqnarray}
\alpha(C-z)&=&\alpha(C)\nonumber\\
\alpha\big(G-(V(C)\cup \{ y\})\big)&=&\alpha(G-V(C))\nonumber\\
1\leq \sharp\alpha(C-z)&<&\sharp\alpha(C),\label{e1}\\
\sharp\alpha\big(G-(V(C)\cup \{ y\})\big)&\leq& 
g(n-n(C)-1,\alpha-\alpha(C)),\label{e2}\mbox{ and}\\
\sharp\alpha(G-V(C))
&=&
\sharp\alpha\big(G-(V(C)\cup \{ y\})\big)
+
\sharp\alpha(G-V(C),y)\nonumber \\
&\leq &f(n-n(C),\alpha-\alpha(C))\label{e3}.
\end{eqnarray}
By (\ref{e1}),
the linear program
$$
\begin{array}{lrcl}
\max & \sharp\alpha(C-z)\cdot r+\sharp\alpha(C)\cdot s & &\\
\mbox{such that } & s & \leq & g(n-n(C)-1,\alpha-\alpha(C))\\
 & r+s & \leq & f(n-n(C),\alpha-\alpha(C))\\
 & r,s & \geq & 0
\end{array}
$$
has the unique optimal solution 
\begin{eqnarray*}
r&=&f(n-n(C),\alpha-\alpha(C))-g(n-n(C)-1,\alpha-\alpha(C))\mbox{ and }\\
s&=&g(n-n(C)-1,\alpha-\alpha(C))\big).
\end{eqnarray*}
Since $x$ belongs to some maximum independent set in $G$,
we have $\alpha(G-y)=\alpha(G)$, and
using (\ref{e2}) and (\ref{e3})
as well as the unique optimal solution of the above linear program, 
we obtain
\begin{eqnarray}
\sharp\alpha(G)
&=&\sharp\alpha(G,y)+\sharp\alpha(G-y)\nonumber\\
&=&
\sharp\alpha(C-z)\cdot \sharp\alpha(G-V(C),y)
+\sharp\alpha(C)\cdot \sharp\alpha\big(G-(V(C)\cup \{ y\})\big)\nonumber\\
& \leq &
\sharp\alpha(C-z)\cdot \Big(f(n-n(C),\alpha-\alpha(C))-g(n-n(C)-1,\alpha-\alpha(C))\Big)
\label{e4}\\
&&+\sharp\alpha(C)\cdot g(n-n(C)-1,\alpha-\alpha(C)),\nonumber
\end{eqnarray}
with equality in (\ref{e4}) if and only if 
(\ref{e2}) and (\ref{e3})
hold with equality.
By Theorem \ref{theorem1}, and the choice of $n$,
this implies that (\ref{e4}) holds with equality
if and only if
\begin{enumerate}[(i)]
\item $G-(V(C)\cup \{ y\})$ is isomorphic to 
$G(n-n(C)-1,\alpha-\alpha(C))$,
and 
\item $G-V(C)$ is isomorphic to a graph in  
${\cal F}(n-n(C),\alpha-\alpha(C))$.
\end{enumerate}
If (i) or (ii) fails, then (\ref{e4}) is a strict inequality.
In this case, 
replacing $G-V(C)$ within $G$ 
by $F(n-n(C),\alpha-\alpha(C))$,
and adding a bridge between the special cutvertex $x_0$ 
of $F(n-n(C),\alpha-\alpha(C))$
and the vertex $z$ of $C$, 
yields a connected graph $G'$ of order $n$ 
and independence number $\alpha$
such that $\sharp\alpha(G')$
equals the right hand side of (\ref{e4}).
Now, 
$\sharp\alpha(G_0)=\sharp\alpha(G)<\sharp\alpha(G')$,
which contradicts the choice of $G_0$.
Altogether, we obtain that (i) and (ii) hold.

If $G-V(C)$ is isomorphic to $C_5$, 
then the neighbor of $x$ distinct from $y$
is neither a cutvertex of $G$ nor a true twin of $x$,
which is a contradiction.
Hence, $G-V(C)$ is not isomorphic to $C_5$.
If $\alpha-\alpha(C)=1$,
then $G-V(C)$ is a clique of order at least $2$,
and $y'=y$ has the desired properties.
Hence, we may assume that $\alpha-\alpha(C)\geq 2$.
By (i) and (ii), the vertex $y$ is the special cutvertex $x_0$ of $G-V(C)$.
If $\frac{n-n(C)}{\alpha-\alpha(C)}<2$, 
then no maximum independent set of $G-V(C)$ contains $y$,
which implies the contradiction 
that no maximum independent set of $G$ contains $y$.
Hence, we may assume that $\frac{n-n(C)}{\alpha-\alpha(C)}\geq 2$.
Now, (ii) implies the existence of a bridge $yy'$ in $G-V(C)$
such that $y'$ has the desired properties.
This completes the proof of Claim \ref{clm2}.
\end{proof}
We are now in a position to complete the proof of Claim \ref{clm0}.

For the rest of the proof, 
let $y'$ and $C'$ be as in Claim \ref{clm2}.

Let $n'=n(C')$, and
let $x'$ be the unique neighbor of $y'$ outside of $C'$.

Since $y'$ and each vertex in $C'$ belongs to some maximum independent set in $G$, we obtain
\begin{eqnarray*}
\alpha(G-y')&=&\alpha,\\
\alpha\big(G-(V(C')\cup\{ y'\})\big) &=& \alpha-1\mbox{, and}\\
\alpha(G-N_G[y'])&=&\alpha-1.
\end{eqnarray*}
Now, 
Theorem \ref{theorem1} and the choice of $n$
imply
\begin{eqnarray}
\sharp\alpha(G)&=&\sharp\alpha(G,y')+\sharp\alpha(G-y')
\nonumber\\
&=&\sharp\alpha(G-N_G[y'])
+n(C')\cdot\sharp\alpha\big(G-(V(C')\cup\{ y'\})\big)
\nonumber\\
&\leq & g(n-n'-2,\alpha-1)
+n'\cdot f(n-n'-1,\alpha-1)\label{e5}.
\end{eqnarray}
By Theorem \ref{theorem1} and Lemma \ref{lemma2},
the right hand side of (\ref{e5})
is an upper bound on the number of maximum independent sets 
of a suitable connected graph of order $n$ and independence number $\alpha$
whose structure is as in Lemma \ref{lemma2}(i).
By Lemma \ref{lemma2}, this implies 
\begin{eqnarray}
g(n-n'-2,\alpha-1)
+n'\cdot f(n-n'-1,\alpha-1)
&\leq & f(n,\alpha).\label{e6}
\end{eqnarray}
Since 
$\sharp\alpha(G)=\sharp\alpha(G_0)\geq f(n,\alpha)$,
it follows that
$\sharp\alpha(G)=f(n,\alpha)$,
and (\ref{e5}) and (\ref{e6}) hold with equality.
We obtain 
\begin{eqnarray*}
\sharp\alpha(G-N_G[y'])&=&g(n-n'-2,\alpha-1)\mbox{ and }\\
\sharp\alpha\big(G-(V(C')\cup\{ y'\})\big)&=&f(n-n'-1,\alpha-1),
\end{eqnarray*}
which, by Theorem \ref{theorem1} and the choice of $n$,
imply that 
\begin{enumerate}[(i)]
\item $G-N_G[y']$ is isomorphic to $G(n-n'-2,\alpha-1)$ and
\item $G-(V(C')\cup\{ y'\})$
is isomorphic to a graph in ${\cal F}(n-n'-1,\alpha-1)$.
\end{enumerate}
By (i), the graph $G-(V(C')\cup\{ y'\})$ can not be isomorphic to $C_5$.

If $\alpha=2$, then $G$ arises by adding a bridge between two disjoint cliques,
and Lemma \ref{lemma2} 
implies that $G$ is isomorphic to a graph in ${\cal F}(n,\alpha)$.

If $\alpha\geq 3$, then (i) and (ii) together imply that 
$x'$ is the special cutvertex $x_0$ of $G-(V(C')\cup\{ y'\})$.
Now, the construction of $G$ from $G_k$, and Lemma \ref{lemma2} 
imply that $G$
is isomorphic to a graph in ${\cal F}(n,\alpha)$.
This complete the proof of Claim \ref{clm0}.
\end{proof}
If $\frac{n}{\alpha}\geq 2$,
then no edge can be added to $G$ 
without reducing $\alpha(G)$ or $\sharp\alpha(G)$,
which implies that $G_k=G$ in this case.
If $\frac{n}{\alpha}<2$,
then the only edges that can be added to $G$ 
without reducing $\alpha(G)$ or $\sharp\alpha(G)$,
are incident with the special cutvertex $x_0$ of $G$.
Altogether, it follows in both cases
that $G_k$ is isomorphic to a graph in ${\cal F}(n,\alpha)$.

Since $G_0$ is a counterexample, we have $k\geq 1$.

First, we assume that $\frac{n}{\alpha}\geq 2$.
This implies that $G_k$ is isomorphic to $F(n,\alpha)$.
Let $C_0,\ldots,C_{\alpha-1}$
and $x_0,\ldots,x_{\alpha-1}$
be as in the definition of $F(n,\alpha)$.
Note that $x$ and $y_k$ are true twins and no cutvertices of $G_k$,
and, hence, belong to the same clique, say $C_i$.
If $C_i\subseteq N_{G_{k-1}}[y_k]$,
then $G_{k-1}$ arises from $G_k$ by adding edges incident with $y_k$,
which implies the contradiction $\sharp\alpha(G_{k-1})<\sharp\alpha(G_k)$.
If $C_j\subseteq N_{G_{k-1}}[y_k]$ for some 
$j\in \{ 0,\ldots,\alpha-1\}\setminus \{ i\}$,
that is, $G_{k-1}$ is a supergraph of a graph as in Lemma \ref{lemma2},
then Lemma \ref{lemma2} implies that $G_{k-1}$ is isomorphic to $F(n,\alpha)$,
which implies the contradiction that $y_k$ is not adjacent to $x$ in $G_{k-1}$.
Since $\alpha(G_k)=\alpha(G_{k-1})$,
the structure of $F(n,\alpha)$ easily implies that 
\begin{eqnarray*}
N_{G_{k-1}}[y_k]\cap C_0&=&C_0\setminus \{ x_0\}\mbox{ and }\\
N_{G_{k-1}}[y_k]\cap C_j&=&C_j\setminus \{ x_j\}
\mbox{ for some $j\in \{ 1,\ldots,\alpha-1\}$ such that $i\in \{ 0,j\}$.}
\end{eqnarray*}
Similarly as in Lemma \ref{lemma2}, we have
$$
\sharp\alpha(G_k)=
(n_0-1)\prod_{k=1}^{\alpha-1} n_k+\prod_{k=1}^{\alpha-1} (n_k-1),$$
where $n_k$ is the order of $C_k$ for $k\in \{ 0,\ldots,\alpha-1\}$.

If $i=0$, then, 
considering the maximum independent sets of $G_{k-1}$ 
that contain neither $x_0$ nor $y_k$,
those that contain $x_0$ but not $y_k$, 
those that contain $y_k$ but not $x_0$, and
that contain $x_0$ and $y_k$,
we obtain 
\begin{eqnarray*}
\sharp\alpha(G_{k-1})&\leq &
(n_0-2)\prod_{k=1}^{\alpha-1} n_k
+\prod_{k=1}^{\alpha-1} (n_k-1)
+\frac{1}{n_j}\prod_{k=1}^{\alpha-1}n_k
+\frac{1}{n_j-1}\prod_{k=1}^{\alpha-1}(n_k-1)\\
& = & \sharp\alpha(G_k)
+\frac{1}{n_j}\prod_{k=1}^{\alpha-1}n_k
+\frac{1}{n_j-1}\prod_{k=1}^{\alpha-1}(n_k-1)
-\prod_{k=1}^{\alpha-1} n_k.
\end{eqnarray*}
Since either $\alpha\geq 3$ and $n_k\geq 2$ for every $k\in \{ 0,\ldots,\alpha-1\}$,
or $\alpha=2$ and $n_1\geq 3$,
this implies the contradiction $\sharp\alpha(G_{k-1})<\sharp\alpha(G_k)$.

If $i=j$, then we obtain 
\begin{eqnarray*}
\sharp\alpha(G_{k-1})&\leq &
(n_0-1)\frac{(n_j-1)}{n_j}\prod_{k=1}^{\alpha-1} n_k
+\frac{(n_j-2)}{(n_j-1)}\prod_{k=1}^{\alpha-1} (n_k-1)
+\frac{1}{n_j}\prod_{k=1}^{\alpha-1}n_k\\
&&+\frac{1}{n_j-1}\prod_{k=1}^{\alpha-1}(n_k-1)\\
& = & \sharp\alpha(G_k)
-\frac{n_0-1}{n_j}\prod_{k=1}^{\alpha-1}n_k
+\frac{1}{n_j}\prod_{k=1}^{\alpha-1}n_k.
\end{eqnarray*}
Since, in this case, we have $x,y_k,x_j\in C_j$,
we obtain $n_0\geq n_j\geq 3$,
which implies the contradiction $\sharp\alpha(G_{k-1})<\sharp\alpha(G_k)$.

Next, we assume that $\frac{n}{\alpha}<2$.
This implies that $G_k$ arises from $F(n,\alpha)$
by adding edges incident with the special cutvertex $x_0$ of $F(n,\alpha)$.
Since $x$ and $y_k$ are true twins and no cutvertices of $G_k$,
was may assume, by symmetry, 
that $C_1=\{ x,y_k\}$, and that $x_0$ is adjacent to $x$ and $y_k$.
Since $y_k$ is a neighbor of $x$ in $G_{k-1}$,
the graph $G_{k-1}$ arises from $F(n,\alpha)$
by adding an edge between $y_k$ and some vertex distinct from $x_0$,
which easily implies the contradiction $\sharp\alpha(G_{k-1})<\sharp\alpha(G_k)$.

This completes the proof.
\end{proof}

\end{document}